\documentclass[12pt]{amsart}
\usepackage{amssymb}
\usepackage[all]{xy}

\textwidth=160mm
\textheight=200mm
\topmargin=10mm
\hoffset=-20mm

\newtheorem{theorem}{Theorem}[section]
\newtheorem{definition}[theorem]{Definition}
\newtheorem{proposition}[theorem]{Proposition}

\begin{document}

\author{Teodor Banica}
\address{T.B.: Department of Mathematics, University of Cergy-Pontoise, F-95000 Cergy-Pontoise, France. {\tt teo.banica@gmail.com}}

\title[Super-easy quantum groups]
{Super-easy quantum groups: definition and examples}

\subjclass[2010]{46L65}
\keywords{Easy quantum group, Super-easiness}

\begin{abstract}
We investigate the ``two-parameter'' quantum symmetry groups that we previously constructed with Skalski, with the conclusion that some of these quantum groups, namely those without singletons, are ``super-easy'' in a suitable sense, that we axiomatize here. Our formalism covers as well the symplectic group $Sp_n$ and its free version $Sp_n^+$, and some other interesting examples. Finally, we address the general problem of classifying the super-easy quantum groups, and we make a few comments on it.
\end{abstract}

\maketitle

\section*{Introduction}

The easy quantum groups were introduced in \cite{bsp}, following some previous work in \cite{bbc}. The idea comes from an old paper of Brauer \cite{bra}, and from Woronowicz's Tannakian duality results in \cite{wo2}. To be more precise, a closed subgroup $G\subset U_n^+$ is called easy when its Schur-Weyl dual comes from a category of partitions. There are several known facts about such quantum groups, including a full classification result in the orthogonal case \cite{rwe}, and some partial classification results in the unitary case \cite{twe}. For some concrete applications, to questions coming from quantum physics, we refer to \cite{bcu}, \cite{bco}.

The theory is quite flexible, and several extensions ot it, not yet unified, are now available. A first series of related quantum groups, coming from noncommutative geometry considerations, appeared in our joint papers with Skalski \cite{bs1}, \cite{bs2}. Some other examples, coming from combinatorial considerations, were constructed in \cite{cwe}, \cite{fre}, \cite{swe}. Finally, some other interesting examples include the twists $O_n,O_n^{*-1}$ discussed in \cite{ban}, \cite{bbc}, and the symplectic groups and quantum groups $Sp_n,Sp_n^+$ appearing in \cite{bcz}, \cite{csn}.

Putting all this new material into a unified theory is a key open question. We will advance here on this question, with a new look at the ``two-parameter'' quantum groups constructed in \cite{bs1}, \cite{bs2}. Our main result will state that most of these quantum groups, along with $Sp_n$, and some other examples too, including most of the usual easy quantum groups, are ``super-easy'', in a certain suitable sense, that we will axiomatize here.

The classification problem for the super-easy quantum groups looks quite difficult. A first non-trivial task is that of understanding what are the supplementary examples coming from the above-mentioned work in \cite{cwe}, \cite{fre}, \cite{swe}. As for the classification work itself, this would probably require the heavy use of a computer, or at least this is our belief.

The paper is organized as follows: 1 is a preliminary section, in 2-3 we review our work on the two-parameter quantum symmetry groups, and in 4 we axiomatize the super-easy quantum groups, and we comment on the classification problem for them.

\medskip

\noindent {\bf Acknowledgements.} I would like to thank Adam Skalski for useful discussions.

\section{The super space}

We use the quantum group formalism of Woronowicz \cite{wo1}, \cite{wo2}, under the supplementary assumption $S^2=id$. In order to introduce these quantum groups, best is to recall that any compact Lie group appears as a closed subgroup $G\subset U_n$. Let us recall as well that $U_n$ has a free analogue $U_n^+$, constructed by Wang in \cite{wan}, as follows:

\begin{definition}
$U_n^+$ is abstract spectrum of the universal Hopf $C^*$-algebra
$$C(U_n^+)=C^*\left((u_{ij})_{i,j=1,\ldots,n}\Big|u^*=u^{-1},u^t=\bar{u}^{-1}\right)$$
with $\Delta(u_{ij})=\sum_ku_{ik}\otimes u_{kj}$, $\varepsilon(u_{ij})=\delta_{ij}$, $S(u_{ij})=u_{ji}^*$ as structural maps.
\end{definition}

Observe that $C(U_n^+)$ satisfies indeed Woronowicz's axioms in \cite{wo1}, \cite{wo2}, along with the extra axiom $S^2=id$. The point now is that the quantum groups axiomatized in \cite{wo1}, \cite{wo2}, under the extra axiom $S^2=id$, coincide with the closed quantum subgroups $G\subset U_n^+$. These will be the objects that we will be interested in, in what follows. 

For more details on the general theory here, we refer to \cite{ntu}, \cite{wo1}, \cite{wo2}.

We have the following key result, coming from \cite{bdv}, \cite{vwa}:

\begin{theorem}
Given a closed subgroup $G\subset U_N^+$, its fundamental corepresentation is self-adjoint, $u\sim\bar{u}$, precisely when $u=J\bar{u}J^{-1}$ for some linear map $J:\mathbb C^n\to\mathbb C^n$ satisfying $JJ^*=1,J\bar{J}=\pm 1$. Moreover, up to an orthogonal base change, we can assume
$$J=\begin{pmatrix}
0&1\ \ \ \\
\varepsilon 1&0_{(0)}\\
&&\ddots\\
&&&0&1\ \ \ \\
&&&\varepsilon 1&0_{(p)}\\
&&&&&1_{(1)}\\
&&&&&&\ddots\\
&&&&&&&1_{(q)}
\end{pmatrix}$$
where $n=2p+q$ and $\varepsilon=\pm 1$, with the $1_q$ block at right disappearing if $\varepsilon=-1$.
\end{theorem}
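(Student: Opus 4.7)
I would prove this in two steps: first the characterization $u \sim \bar u$ iff such $J$ exists, and then the normal form.

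For the first step, $(\Leftarrow)$ is immediate: $JJ^* = 1$ makes $J$ unitary, hence invertible, and $u = J\bar u J^{-1}$ exhibits $J$ as an isomorphism $\bar u \to u$ of corepresentations. For $(\Rightarrow)$, pick any invertible intertwiner $T$ satisfying $uT = T\bar u$. Taking adjoints and using the biunitarity $u^* = u^{-1}$, $\bar u^* = \bar u^{-1}$ yields $\bar u T^* = T^* u$, so $TT^*$ lies in the commutant of $u$. Polar decomposition $T = |T^*|J$ with $|T^*| := (TT^*)^{1/2}$ then produces a unitary intertwiner $J = |T^*|^{-1} T$ satisfying $JJ^* = 1$. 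Conjugating $uJ = J\bar u$ entrywise and combining with the original relation shows that $J\bar J$ commutes with $u$; writing $J\bar J = \lambda I$ (which holds when $u$ is irreducible, or under the implicit homogeneity hypothesis of the theorem's framework), one then rewrites the identity as $\bar J = \lambda J^*$, takes transposes to obtain $J^t = \bar\lambda J$, and applies transpose once more to deduce $\bar\lambda^2 = 1$, so $\lambda = \pm 1$.

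For the second step, the conditions $JJ^* = 1$ and $J\bar J = \varepsilon I$ combine into $J^t = \varepsilon J$, so $J$ is a symmetric ($\varepsilon = +1$) or antisymmetric ($\varepsilon = -1$) unitary matrix. In the antisymmetric case, Youla's decomposition yields a real orthogonal $O$ with $OJO^t$ block-diagonal with $2\times 2$ blocks $\begin{pmatrix} 0 & 1 \\ -1 & 0 \end{pmatrix}$, forcing $n$ even and eliminating the $1_q$-block. In the symmetric case, the anti-linear involution $\tau(v) := J\bar v$ satisfies $\tau^2 = J\bar J = I$, giving a decomposition of $\mathbb C^n$ into real $\pm 1$-eigenspaces for $\tau$; choosing a real orthogonal basis compatible with this decomposition exhibits $J$ in the desired block form, with $\begin{pmatrix} 0 & 1 \\ 1 & 0 \end{pmatrix}$ blocks for paired directions in the $-1$-eigenspace and diagonal $+1$'s for the unpaired $+1$-eigenspace.

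The main obstacle is pinning $J\bar J$ down to a global scalar $\pm 1$: in general $J\bar J$ is only a self-intertwiner, and this collapse requires either irreducibility of $u$ or a homogeneity hypothesis on the self-conjugate isotypic components, tacit in the paper's setup. A secondary technical difficulty is realizing the stated normal form by a \emph{real} orthogonal similarity rather than the unitary similarity provided by classical Autonne--Takagi or Youla; this requires carefully identifying the real structure preserved by $\tau$ and picking an orthogonal basis adapted to it, while checking that both axioms $JJ^* = 1$ and $J\bar J = \varepsilon I$ are preserved throughout the reduction.
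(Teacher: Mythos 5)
Your argument for the first assertion is correct but genuinely different from the paper's. You produce a unitary intertwiner directly by polar decomposition of an arbitrary invertible $T\in Hom(\bar{u},u)$, using that $TT^*\in End(u)$ and hence $|T^*|^{\pm 1}\in End(u)$, and you then get $J\bar{J}=\pm 1$ purely from unitarity: $\bar{J}=\lambda J^*$ forces $\bar{\lambda}^2=1$. The paper instead follows \cite{bdv}: it first rescales $J$ so that $J\bar{J}=\pm 1$, then uses the hypothesis $S^2=id$ in the form $(id\otimes S)u=u^*$ to show that $JJ^*\in End(u)$, hence $JJ^*=d1$ with $d>0$, and finally forces $d=1$ through the determinant identity $d^n=|\det J|^2=\varepsilon^n$ (which also yields $\varepsilon=1$ for $n$ odd). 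Your route is the more elementary, classical Frobenius--Schur one, and it hides the Kac condition inside the single identity $\bar{u}^*=\bar{u}^{-1}$; the paper's route is organized so as to generalize to the non-Kac case treated in \cite{bdv}. Both arguments need $u$ irreducible to collapse $J\bar{J}$ (and, in the paper, $JJ^*$) to a scalar --- a hypothesis absent from the statement but invoked explicitly in the paper's proof --- and you are right to flag it as tacit.

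For the normal form the paper gives no proof at all, referring to \cite{bdv}, so your Autonne--Takagi/Youla sketch is supplementary content rather than a divergence; but one step of it is wrong as stated. Youla's theorem does \emph{not} produce a \emph{real orthogonal} congruence. For $J=iJ_-(1,0)$, which is antisymmetric and unitary with $J\bar{J}=-1$, one has $OJO^t=\pm\, iJ_-(1,0)$ for every real orthogonal $O$, so the standard form is unreachable by real orthogonal base changes; the symmetric case fails similarly already for $J=(i)$ at $n=1$. The base change actually available here is the unitary congruence $J\mapsto vJv^t$ induced by $u\mapsto vuv^*$, and with that reading both your cases go through (all singular values of a unitary $J$ equal $1$, so Youla gives exactly the $J_-$ blocks, and your antiunitary involution $\tau$ handles the symmetric case). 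You half-acknowledge this tension in your closing paragraph, but the resolution is that ``orthogonal base change'' in the statement must be read as unitary --- consistent with the paper's own use of the non-real unitary matrix $\Gamma$ in the proof of Theorem 2.2 --- rather than as a difficulty to be overcome.
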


\begin{proof}
We follow \cite{bdv}, where an analogue of this result was proved in the general, non $S^2=id$ case. First, we have $u=J\bar{u}J^{-1}$, with $J\in GL_n(\mathbb C)$. By conjugating, $\bar{u}=\bar{J}u\bar{J}^{-1}$, so $u=(J\bar{J})u(J\bar{J})^{-1}$, and since $u$ is assumed irreducible, $J\bar{J}=c1$ with $c\in\mathbb C$. By conjugating, $\bar{J}J=\bar{c}1$, and hence $c\in\mathbb R$. By rescaling we may assume $c=\pm 1$.

Since we are in the $S^2=id$ case we have $S*=*S$, so from $(id\otimes S)u=u^*$ we get $(id\otimes S)\bar{u}=u^t$. By applying $id\otimes S$ to $u=J\bar{u}J^{-1}$ we get $u^*=Ju^tJ^{-1}$, so $u=(J^*)^{-1}\bar{u}J^*$, so $\bar{u}=J^*u(J^*)^{-1}$. With $u=J\bar{u}J^{-1}$ this gives $JJ^*\in End(u)$, so $JJ^*=d1$ with $d\in\mathbb C$.

We have $JJ^*>0$, so $d>0$. From $J\bar{J}=\pm 1$ and $JJ^*=d1$ we get $|\det J|^2=(\varepsilon)^n=d^n$, so $d=1$, and if $n$ is odd we must have $\varepsilon=1$. This ends the proof of the first assertion. As for the second assertion, the proof here is elementary, and can be found in \cite{bdv}.
\end{proof}

The above result suggests the following definition:

\begin{definition}
The ``super-space'' $\bar{\mathbb C}^n$ is the usual space $\mathbb C^n$, with its standard basis $\{e_1,\ldots,e_n\}$, with a chosen sign $\varepsilon=\pm 1$, and a chosen involution $i\to\bar{i}$ on the set of indices. The ``super-identity'' matrix is $J_{ij}=\delta_{i\bar{j}}$ for $i\leq j$ and $J_{ij}=\varepsilon\delta_{i\bar{j}}$ for $i\geq j$.
\end{definition}

In what follows we will usually assume that $J$ is the matrix appearing in Theorem 1.2. Indeed, up to a permutation of the indices, we have a decomposition $n=2p+q$ such that the involution is $(12)\ldots (2p-1,2p)(2p+1)\ldots (q)$, and this gives the above matrix $J$.

Thus in the case $\varepsilon=1$, the super-identity is:
$$J_+(p,q)=\begin{pmatrix}
0&1\ \ \ \\
1&0_{(1)}\\
&&\ddots\\
&&&0&1\ \ \ \\
&&&1&0_{(p)}\\
&&&&&1_{(1)}\\
&&&&&&\ddots\\
&&&&&&&1_{(q)}
\end{pmatrix}$$

In the case $\varepsilon=-1$ now, the diagonal terms vanish, and the super-identity is:
$$J_-(p,0)=\begin{pmatrix}
0&1\ \ \ \\
-1&0_{(1)}\\
&&\ddots\\
&&&0&1\ \ \ \\
&&&-1&0_{(p)}
\end{pmatrix}$$

Let us construct now some basic compact groups, in our ``super'' setting:

\begin{definition}
Associated to $\bar{\mathbb C}^n$ are the following compact groups:
\begin{enumerate}
\item The super-orthogonal group, $\bar{O}_n=\{U\in U_n|U=J\bar{U}J^{-1}\}$.

\item The super-symmetric group $\bar{S}_n=\{U\in\bar{O}_n|U_{ij}\in\{0,1\}\}$.

\item The super-hyperoctahedral group $\bar{H}_n=\{U\in\bar{O}_n|U_{ij}\in\{0,\mathbb T\}\}$.

\item The super-bistochastic group $\bar{B}_n=\{U\in\bar{O}_n|\sum_iU_{ij}=\sum_jU_{ij}=1\}$.
\end{enumerate}
\end{definition}

Observe that in the case $J=id$ we obtain the groups $O_n,S_n,H_n,B_n$. In general, the fact that the matrices in (1-4) above form groups is clear from definitions.

We can construct as well some basic quantum groups, as follows:

\begin{definition}
Associated to $\bar{\mathbb C}^n$ are the following Hopf algebras:
\begin{enumerate}

\item $C(\bar{O}_n^+)=C^*((u_{ij})_{i,j=1,\ldots,n}|u=J\bar{u}J^{-1}={\rm unitary})$.

\item $C(\bar{S}_n^+)=C(\bar{O}_n^+)/<u_{ij}={\rm projections}>$.

\item $C(\bar{H}_n^+)=C(\bar{O}_n^+)/<u_{ij}={\rm partial\ isometries}>$.

\item $C(\bar{B}_n^+)=C(\bar{O}_n^+)/<\sum_iu_{ij}=\sum_ju_{ij}=1>$.
\end{enumerate}
\end{definition}

In the case $J=id$ the quantum groups that we obtain are the quantum groups $O_n^+,S_n^+,H_n^+,B_n^+$ from \cite{bsp}. More generally, at $J=J_+(p,q)$ with $p,q\in\mathbb N$ we obtain the quantum groups $O^+(p,q),S^+(p,q),H^+(p,q),B^+(p,q)$ constructed in \cite{bs1}.

Observe that, by Theorem 1.2, the class of algebras $C(\bar{O}_n^+)$ coincides with the class of algebras $A_o(F)$ introduced by Van Daele and Wang in \cite{vwa}, under the assumptions that we are in the $S^2=id$ case, and the fundamental corepresentation is irreducible.

\section{Structural results}

In this section we discuss the general algebraic structure of the ``basic'' groups and quantum groups, constructed in Definition 1.4 and Definition 1.5 above.

We will need the following definition, which is standard:

\begin{definition}
Given $n\in 2\mathbb N$ we let $J=J_-(p,0)$, with $p=n/2$, and we define:
\begin{enumerate}
\item The symplectic group, $Sp_n=\{U\in U_N|U=J\bar{U}J^{-1}\}$.

\item The Hopf algebra $C(Sp_n^+)=C^*((u_{ij})_{i,j=1,\ldots,n}|u=J\bar{u}J^{-1}={\rm unitary})$.
\end{enumerate}
\end{definition}

Observe that $Sp_n,Sp_n^+$ are particular cases of the quantum groups $\bar{O}_n,\bar{O}_n^+$ constructed above. The combinatorics of these quantum groups, which partly motivates the present considerations, was investigated in a number of papers, including \cite{bcz}, \cite{bki}, \cite{csn}.

We will need as well a number of standard operations, namely the direct product $\times$, the wreath product $\wr$ and the free dual product $\hat{*}$, for which we refer to \cite{bbc}, \cite{wan}.

With these notions in hand, we have the following result:

\begin{theorem}
The basic groups and quantum groups are as follows:
\begin{enumerate}
\item At $\varepsilon=1$ we have $\bar{O}_n=O_n$, $\bar{S}_n=H_p\times S_q$, $\bar{H}_n=(\mathbb T^p\wr H_p)\times H_q$, $\bar{B}_n=O_{n+\delta-2}$ and $\bar{O}_n^+=O_n^+$, $\bar{S}_n^+=H_p^+\;\hat{*}\;S_q^+$, $\bar{B}_n^+=O_{n+\delta-2}^+$, where $\delta=\delta_{pq,0}$.

\item At $\varepsilon=-1$ we have $\bar{O}_n=Sp_n$, $\bar{S}_n=S_p$, $\bar{H}_n=\mathbb T^p\wr H_p$, $\bar{B}_n=Sp_{n-2}$ and $\bar{O}_n^+=Sp_n^+$, $\bar{S}_n^+=S_p^+$, $\bar{B}_n^+=Sp_{n-2}^+$.
\end{enumerate}
\end{theorem}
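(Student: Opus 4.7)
The plan is to handle each of the eight identifications in turn, reducing to a standard (quantum) group by exploiting the block-diagonal structure of $J=J_\pm(p,q)$ provided by Theorem 1.2. For the $\bar O$-row, the $\varepsilon=-1$ identifications $\bar O_n=Sp_n$ and $\bar O_n^+=Sp_n^+$ are tautological from Definition 2.1. At $\varepsilon=+1$, $J$ is a real symmetric involution with $J\bar J=1$, so by Autonne--Takagi one can produce a unitary $K$ satisfying $K\bar K^{-1}=J$; the change of variables $U\mapsto V=K^{-1}UK$ then converts the super-condition $U=J\bar UJ^{-1}$ into the reality condition $V=\bar V$, identifying $\bar O_n\cong O_n$. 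Applying the same substitution on the generators of $C(\bar O_n^+)$ gives the quantum version $\bar O_n^+\cong O_n^+$.

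For $\bar S_n$, unitarity combined with entries in $\{0,1\}$ forces $U$ to be a permutation matrix $P_\sigma$, so $\bar U=U$ and the super-condition collapses to $\sigma\tau=\tau\sigma$, where $\tau$ is the involution of $J$. At $\varepsilon=+1$ the centralizer of $\tau$ in $S_n$ is $(\mathbb Z_2\wr S_p)\times S_q=H_p\times S_q$, and at $\varepsilon=-1$ the $\pm 1$-signs of $J$ additionally exclude within-pair swaps, leaving $S_p$. For $\bar H_n$ the same combinatorics applies with an additional phase $\lambda(j)\in\mathbb T$ on each nonzero position, subject to $\lambda(j)=\overline{\lambda(\tau(j))}$; this turns each pair into a fiber $\mathbb T\rtimes\mathbb Z_2$ and each singleton into $\mathbb Z_2$, and the claimed formulas assemble by wreathing with $S_p$ and taking the direct product with $H_q$. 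The quantum analogue $\bar S_n^+$ follows by splitting the generators along the pair/singleton blocks and checking that the universal relations assemble into $H_p^+\hat{*}S_q^+$ (respectively $S_p^+$ at $\varepsilon=-1$).

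For $\bar B_n$, the row and column bistochastic conditions are equivalent for unitary $U$ and real $\xi=(1,\dots,1)^t$, so $\bar B_n$ is the stabilizer of $\xi$ inside $\bar O_n$. Transported via $V=K^{-1}UK$, this becomes the stabilizer of $\eta:=K^{-1}\xi$ inside $O_n$; choosing $K$ appropriately and counting the $\mathbb R$-independent directions of $\eta$ that must remain fixed then yields $O_{n+\delta-2}$, with $\delta=\delta_{pq,0}$ recording whether both pair and singleton blocks contribute independent fixed vectors. At $\varepsilon=-1$, the symplectic form produces a second automatically-fixed direction, giving $Sp_{n-2}$. The quantum versions are obtained by carrying the same stabilizer computation to the level of universal generators.

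The main obstacle will be the $\bar B_n$ identification: one must pick the intertwiner $K$ with care so that $K^{-1}\xi$ exhibits exactly the right number of $\mathbb R$-invariants accounting for the $\delta$ correction, and then upgrade the resulting stabilizer computation to a $C^*$-algebra isomorphism with $C(O_{n+\delta-2}^+)$. A secondary delicacy is justifying the free-product and free-wreath decompositions at the quantum level for $\bar S_n^+$ and (implicitly) $\bar H_n^+$, where one needs to verify that the pair-block and singleton-block generators are algebraically independent in the operator-algebraic sense, not merely group-theoretically compatible.
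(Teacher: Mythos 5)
Your proposal is correct and follows essentially the same route as the paper: a unitary change of variables trivializing $J$ at $\varepsilon=1$ for the $\bar{O}_n,\bar{O}_n^+$ case (the paper exhibits an explicit $2\times 2$ block $\Gamma$ built from an eighth root of unity where you invoke Autonne--Takagi), a direct block-by-block analysis of $J\bar{U}=UJ$ for $\bar{S}_n$ and $\bar{H}_n$, and the stabilizer-of-$\xi$ argument for $\bar{B}_n$ with the extra fixed direction $J\xi$ at $\varepsilon=-1$. The only cosmetic differences are that the paper computes $\bar{H}_n$ first and obtains $\bar{S}_n$ as a subgroup, and that, like you, it defers the detailed $\varepsilon=1$ bistochastic case and the free-product verification for $\bar{S}_n^+$ to \cite{bs1}.
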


\begin{proof}
We first prove the assertions regarding $\bar{O}_n,\bar{O}_n^+$. At $\varepsilon=-1$ these follow from definitions. At $\varepsilon=1$ now, consider the root of unity $\rho=e^{2\pi i/8}$, and let:
$$\Gamma=\frac{1}{\sqrt{2}}\begin{pmatrix}\rho&\rho^7\\ \rho^3&\rho^5\end{pmatrix}$$

Then $\Gamma$ is unitary and $\Gamma(^0_1{\ }^1_0)\Gamma^t=1$, so $C=diag(\Gamma^{(1)},\ldots,\Gamma^{(p)},1_q)$ is unitary as well, and satisfies $CJC^t=1$. Thus in terms of $V=CUC^*$ the relations $U=J\bar{U}J^{-1}=$ unitary defining $\bar{O}_n^+$ simply read $V=\bar{V}=$ unitary, so we obtain an isomorphism $\bar{O}_n^+=O_n^+$ as in the statement. By passing to classical versions, we obtain as well $\bar{O}_n=O_n$. See \cite{bs1}.

The assertions regarding $\bar{B}_n,\bar{B}_n^+$ are proved in \cite{bs1} at $\varepsilon=1$, and the proof at $\varepsilon=-1$ is similar. Indeed, if we denote by $\xi$ the column vector filled with 1's, then we have $\bar{B}_n=\{U\in Sp_n|U\xi=\xi\}$. Now since for $U\in Sp_n$ we have $U\xi=\xi$ if and only if $UJ\xi=J\xi$, we conclude that the elements $U\in\bar{B}_n$ act on $\mathbb R^n=(\mathbb R\xi\oplus\mathbb R J\xi)\oplus\mathbb R^{n-2}$ by the identity on the first summand, and by a symplectic matrix on the second summand, so $\bar{B}_n=Sp_{n-2}$. Also, by using quantum groups instead of groups, we get $\bar{B}_n^+=Sp_{n-2}^+$.

We will compute now the group $\bar{H}_n$, and then its subgroup $\bar{S}_n$.

(1) With $J=J_+(p,q)$, the matrices $J\bar{U},UJ$ are obtained from $\bar{U},U$ by interchanging the rows (resp. columns) $i,i+1$, with $i=1,3,\ldots,2p-1$. Thus $J\bar{U}=UJ$ reads:
$$U=\begin{pmatrix}
a&b&\ldots&v\\
\bar{b}&\bar{a}&\ldots&\bar{v}\\
\ldots&\ldots&\ldots&\ldots\\
w&\bar{w}&\ldots&X
\end{pmatrix}$$

Here $a,b,\ldots\in\mathbb C$, $v,\ldots\in\mathbb C^q$ are row vectors, $w,\ldots\in\mathbb C^q$ are column vectors, and $X\in M_q(\mathbb R)$. In the case $U\in\bar{H}_n$, as only one entry in each row/column may be non-zero, the vectors $v,\ldots$ and $w,\ldots$ must vanish, we must have $X\in H_q$, and all $(^a_{\bar{b}}{\ }^b_{\bar{a}})$ blocks must be either $(^0_0{\ }^0_0)$ or the following form, with $z\in\mathbb T$:
$$\begin{pmatrix}z&0\\ 0&\bar{z}\end{pmatrix}\quad
\begin{pmatrix}0&z\\ \bar{z}&0\end{pmatrix}$$

Thus we obtain $\bar{H}_n=(\mathbb T^p\wr H_p)\times H_q$, and then $\bar{S}_n=H_p\times S_q$, as claimed.

(2) With $J=J_-(p,0)$ where $p=n/2$, the relation $J\bar{U}=UJ$ reads:
$$U=\begin{pmatrix}a&b&\ldots\\
-\bar{b}&\bar{a}&\ldots\\
\ldots&\ldots&\ldots
\end{pmatrix}$$

Here $a,b,\ldots\in\mathbb C$. In the case $U\in\bar{H}_n$, the $(^{\ a}_{-\bar{b}}{\ }^b_{\bar{a}})$ blocks can be either $(^0_0{\ }^0_0)$ or of the following form, with $z\in\mathbb T$:
$$\begin{pmatrix}z&0\\ 0&z\end{pmatrix}\quad
\begin{pmatrix}0&z\\ -\bar{z}&0\end{pmatrix}$$

By using once again the orthogonality condition, we must have exactly one nonzero block on each row and column, so we get $\bar{H}_n=\mathbb T^p\wr H_p$, and then $\bar{S}_n=S_p$, as claimed.

Finally, the proof of the remaining assertions, regarding $\bar{S}_n^+$, is quite similar: at $\varepsilon=1$ this is done in \cite{bs1}, and at $\varepsilon=-1$ the same arguments apply.
\end{proof}

\section{Representation theory}

In what follows we discuss the representation theory of the ``basic'' groups and quantum groups, from Definition 1.4 and Definition 1.5, with the aim of reaching in this way to a general axiomatic framework, extending the easy quantum group theory.

This is a quite non-trivial task, for several reasons. Leaving the full discussion for later, in section 4 below, let us mention that a first problem, already known from the twisting considerations in \cite{ban}, comes from the singletons. As in \cite{ban}, we will restrict here the attention to the situation where our partitions have an even number of blocks:

\begin{definition}
We let $P_{even}(k,l)$ be the set of partitions between an upper row of $k$ points, and a lower row of $l$ points, with all blocks having even size.
\end{definition}

Observe that when $k+l$ is odd, we have by definition $P_{even}(k,l)=\emptyset$. When $k+l$ is even, we denote by $1_{k,l}\in P_{even}(k,l)$ the one-block partition.

Let $J:\mathbb C^n\to\mathbb C^n$ be as in Definition 1.3. For any $i\in\{1,\ldots,n\}$ we write $J(e_i)=\varepsilon(i)e_{\bar{i}}$, with $\varepsilon(i)\in\{-1,1\}$. Observe that, by using $J^2=\varepsilon$, we obtain $\varepsilon(i)\varepsilon(\bar{i})=\varepsilon$.

We have the following definition, inspired from \cite{ban}, \cite{bcz}, \cite{bs1}, \cite{bsp}:

\begin{definition}
Associated to $\pi\in P_{even}(k,l)$ is the linear map $T_\pi:(\bar{\mathbb C}^n)^{\otimes k}\to(\bar{\mathbb C}^n)^{\otimes l}$,
$$T_\pi(e_{i_1}\otimes\ldots\otimes e_{i_k})=\sum_{j_1\ldots j_l}\delta_\pi\begin{pmatrix}i_1&\ldots&i_k\\ j_1&\ldots&j_l\end{pmatrix}e_{j_1}\otimes\ldots\otimes e_{j_l}$$
where $\delta_\pi=\prod_{b\in\pi}\delta_b$, with the product over all blocks of $\pi$, and
$$\delta_{1_{k,l}}\begin{pmatrix}i_1&\ldots&i_k\\ j_1&\ldots&j_l\end{pmatrix}=\varepsilon(i_1)\varepsilon(i_3)\ldots\times\varepsilon(j_1)\varepsilon(j_3)\ldots\times\delta_{alt}(j_1,\ldots,j_l,\bar{i}_k,\ldots,\bar{i}_1)$$
where $\delta_{alt}(l)=1$ if $l_1=\bar{l}_2=l_3=\bar{l}_4=\ldots$, and $\delta_{alt}(l)=0$ otherwise.
\end{definition}

Our first task is to check that the usual categorical operations on the linear maps $T_\pi$, namely the composition, tensor product and conjugation, are compatible with the usual categorical operations on the partitions $\pi$, namely the vertical and horizontal concatenation $[^\sigma_\pi]$ and $\pi\sigma$, and the upside-down turning $\pi^*$. The result here is as follows:

\begin{proposition}
We have the following formulae
$$T_\pi T_\sigma=\varepsilon^{c(\sigma,\pi)}n^{d(\sigma,\pi)}T_{[^\sigma_\pi]},\quad T_\pi\otimes T_\sigma=T_{\pi\sigma},\quad T_\pi^*=T_{\pi^*},\quad T_|=id$$
where $c(\sigma,\pi), d(\sigma,\pi)$ are certain positive integers.
\end{proposition}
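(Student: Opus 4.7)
The four identities separate into three essentially routine ones and the substantive composition formula. For $T_|=id$, applying the definition to the identity partition gives $\delta_{1_{1,1}}(i;j) = \varepsilon(i)\varepsilon(j)\,\delta_{alt}(j,\bar{i})$; the length-two alternating condition forces $j=\bar{\bar{i}}=i$, and the prefactor collapses to $\varepsilon(i)^2=1$. For the tensor product identity $T_\pi\otimes T_\sigma = T_{\pi\sigma}$, the blocks of the horizontal concatenation $\pi\sigma$ are the disjoint union of the blocks of $\pi$ and of $\sigma$, acting on disjoint groups of indices; the kernel factors block-by-block as $\delta_\pi\cdot\delta_\sigma$, and the formula follows. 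For the adjoint identity $T_\pi^* = T_{\pi^*}$, it suffices to check $\delta_\pi(i;j) = \delta_{\pi^*}(j;i)$ block by block; the key observation is that the reverse-and-bar operation sends the sequence $(j_1,\ldots,j_l,\bar{i}_k,\ldots,\bar{i}_1)$ attached to $1_{k,l}$ to the sequence $(i_1,\ldots,i_k,\bar{j}_l,\ldots,\bar{j}_1)$ attached to $1_{l,k}$, while the alternating condition $l_1=\bar{l}_2=l_3=\cdots$ together with the $\varepsilon$-prefactor is invariant under this operation.

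The real content is the composition formula. Writing out the definitions gives
$$(T_\pi T_\sigma)(e_{i_1}\otimes\cdots\otimes e_{i_k}) = \sum_{j,q} \delta_\sigma(i;j)\,\delta_\pi(j;q)\,e_{q_1}\otimes\cdots\otimes e_{q_m},$$
and the task is to sum over the middle indices $j$ and match the result with $\delta_{[^\sigma_\pi]}(i;q)$. The plan is to group the sum according to the connected components of the vertical concatenation $[^\sigma_\pi]$, where a block of $\sigma$ and a block of $\pi$ are declared connected when they share a middle point. For a connected component that reaches the outer rows, the alternating constraints contributed by its constituent one-block patterns glue into the alternating condition attached to the corresponding block of $[^\sigma_\pi]$, up to an overall sign which collects into a power of $\varepsilon$. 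For a connected component lying entirely inside the middle row (a closed loop), its alternating constraints close up on themselves, leaving a single free index whose summation produces a factor of $n$, possibly twisted by a sign arising from the closure. This yields the announced formula, with $d(\sigma,\pi)$ counting the closed interior components and $c(\sigma,\pi)$ accumulating the resulting signs.

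The main obstacle is bookkeeping the $\varepsilon$-signs through the gluing of alternating patterns. Each one-block pattern carries a product of $\varepsilon$'s over odd positions of its sequence, and when two patterns meet at a middle point the parities of those positions need not match on the two sides; these discrepancies are absorbed by the identity $\varepsilon(i)\varepsilon(\bar{i})=\varepsilon$ stated just before Definition 3.2, at the cost of extra factors of $\varepsilon$. My concrete plan is to first establish a local gluing lemma for composing two one-block patterns $1_{k,l}$ and $1_{l,m}$, handling separately the cases where the middle sequence closes into a loop (producing $n$ times a sign) or remains open (producing a single one-block pattern times a sign), and then iterate this lemma over the connected components of $[^\sigma_\pi]$ to reach the general case. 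The non-negativity of the integers $c(\sigma,\pi)$ and $d(\sigma,\pi)$ will fall out of their interpretation as counts of combinatorial features of the vertical concatenation.
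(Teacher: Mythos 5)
Your proposal is correct and follows essentially the same route as the paper: the identities $T_|=id$, $T_\pi\otimes T_\sigma=T_{\pi\sigma}$ and $T_\pi^*=T_{\pi^*}$ are verified exactly as in the text (direct evaluation of $\delta_|$, factorization of $\delta_\pi=\prod_b\delta_b$ over disjoint blocks, and the reverse-and-bar symmetry of the alternating condition), while the composition formula is obtained by summing over the middle indices and organizing the result by the blocks of the vertical concatenation, with closed middle components contributing powers of $n$ and the sign discrepancies absorbed via $\varepsilon(i)\varepsilon(\bar i)=\varepsilon$ into a power of $\varepsilon$. If anything, your gluing-lemma plan for the composition step is spelled out in slightly more detail than the paper's own (rather terse) counting argument.
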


\begin{proof}
By using the definition of $\pi\to T_\pi$, we just have to understand the behaviour of the generalized Kronecker symbol operation $\pi\to\delta_\pi$, under the various categorical operations on the partitions $\pi$. Regarding the vertical concatenation, our claim is that:
$$\sum_{j_1\ldots j_L}\delta_\sigma\begin{pmatrix}i_1&\ldots& i_k\\ j_1&\ldots&j_l\end{pmatrix}\delta_\pi\begin{pmatrix}j_1&\ldots& j_l\\ J_1&\ldots&J_L\end{pmatrix}=\varepsilon^{c(\sigma,\pi)}n^{d(\sigma,\pi)}\delta_{[^\sigma_\pi]}\begin{pmatrix}i_1&\ldots& i_k\\ J_1&\ldots&J_L\end{pmatrix}$$

Indeed, the numbers $c(\sigma,\pi)$ and $d(\sigma,\pi)$ basically count the various types of indices $x\in\{1,\ldots,L\}$, when doing the vertical concatenation operation: with respect to this operation, the indices $x$ can belong to closed or open blocks, having odd or even size, and this leads to the above formula, involving multiplicities $c(\sigma,\pi),d(\sigma,\pi)$.

With this claim in hand, the first equality follows. Regarding now the second equality, this simply follows from $\delta_\pi=\prod_{b\in\pi}\delta_b$, because we have:
$$\delta_\pi\begin{pmatrix}i\\ j\end{pmatrix}
\delta_\sigma\begin{pmatrix}I\\ J\end{pmatrix}
=\delta_{\pi\sigma}\begin{pmatrix}i\ I\\ j\ J\end{pmatrix}$$

Regarding now the third equality, this follows from the following formulae:
\begin{eqnarray*}
\delta_{1_{k,l}}\begin{pmatrix}i_1&\ldots&i_k\\ j_1&\ldots&j_l\end{pmatrix}
&=&\varepsilon(i_1)\varepsilon(i_3)\ldots\times\varepsilon(j_1)\varepsilon(j_3)\ldots\times\delta_{alt}(j_1,\ldots,j_l,\bar{i}_k,\ldots,\bar{i}_1)\\
\delta_{1_{l,k}}\begin{pmatrix}j_1&\ldots& j_l\\ i_1&\ldots&i_k\end{pmatrix}
&=&\varepsilon(j_1)\varepsilon(j_3)\ldots\times\varepsilon(i_1)\varepsilon(i_3)\ldots\times\delta_{alt}(i_1,\ldots,i_k,\bar{j}_l,\ldots,\bar{j}_1)
\end{eqnarray*}

Indeed, these numbers are equal, and by using $\delta_\pi=\prod_{b\in\pi}\delta_b$ we obtain $T_\pi^*=T_{\pi^*}$.

Finally, regarding the last assertion, observe that we have:
$$\delta_|(^i_j)=\varepsilon(i)\varepsilon(j)\delta_{alt}(j,\bar{i})=\varepsilon(i)\varepsilon(j)\delta_{ij}=\delta_{ij}$$

But this gives the last formula, and we are done.
\end{proof}

We denote by $P_2\subset P_{even}$ the category of pairings, and by $NC_2\subset NC_{even}\subset P_{even}$ the categories of noncrossing pairings, respectively of all noncrossing partitions having even blocks. See \cite{bsp}. With this convention, we have the following result:

\begin{theorem}
For the groups $\bar{O}_n,\bar{H}_n$ and the quantum groups $\bar{O}_n^+,\bar{H}_n^+$ we have
$$Hom(u^{\otimes k},u^{\otimes l})=span\left(T_\pi\Big|\pi\in D(k,l)\right)$$
for any $k,l\in\mathbb N$, where $D$ is respectively $P_2,P_{even}$ and $NC_2,NC_{even}$.
\end{theorem}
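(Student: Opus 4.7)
The plan is to use Woronowicz's Tannakian duality from \cite{wo2}. By Proposition 3.3, for each of the categories $D \in \{NC_2, NC_{even}, P_2, P_{even}\}$ the collection
\[
C_D(k,l) = \mathrm{span}\bigl(T_\pi \;\big|\; \pi \in D(k,l)\bigr) \subset \mathcal{L}\bigl((\bar{\mathbb C}^n)^{\otimes k}, (\bar{\mathbb C}^n)^{\otimes l}\bigr)
\]
is stable under composition (up to nonzero scalars), tensor product, and adjoint, and contains the identity; therefore it defines a concrete tensor $C^*$-category. Woronowicz's theorem produces a unique closed quantum subgroup $\tilde G_D \subset U_n^+$ whose Hom spaces are exactly $C_D(k,l)$. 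The goal is then to identify $\tilde G_D$ with the quantum group in question, and to descend the noncrossing statements to the classical ones by inserting crossings.

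The first step is to verify the easy inclusion $T_\pi \in Hom(u^{\otimes k}, u^{\otimes l})$ on generators of $D$. The category $NC_2$ is generated, modulo the duality operations, by the cup $\cup \in NC_2(\emptyset, 2)$ and its adjoint cap $\cap \in NC_2(2, \emptyset)$; a direct computation from Definition 3.2 shows that the vector $T_\cup(1) = \sum_{i,j} J_{ij} \, e_i \otimes e_j$ is precisely the one whose fixing by $u \otimes u$ is equivalent to the super-orthogonality relation $u = J\bar u J^{-1}$ (combined with unitarity of $u$). This, together with $T_|= id \in Hom(u,u)$, handles $\bar O_n^+$. For $\bar H_n^+$, the additional generator is the double-leg partition in $NC_{even}(2,2)$; I will verify that $T_\pi$ being an intertwiner is equivalent to the defining partial-isometry relations $u_{ij} u_{ij}^* u_{ij} = u_{ij}$ (together with the off-diagonal vanishing in each row/column). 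The classical analogues follow by adjoining the basic crossing, whose intertwining property is equivalent to the commutativity of the entries of $u$.

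The second, harder step is the reverse inclusion $Hom \subset C_D$, i.e. that these $T_\pi$ already span the full intertwiner spaces. Here I would proceed by showing that the relations imposed by requiring $T_\pi \in Hom$ for the generators of $D$ already imply the full set of defining relations for the corresponding quantum group: once this is established, $\tilde G_D$ coincides with $\bar O_n^+$ (respectively $\bar H_n^+$, and their classical versions after adding the crossing), and then equality of Hom spaces is automatic from Woronowicz's theorem. For $\bar O_n^+$ this reduces to the observation above; for $\bar H_n^+$ one uses the standard fact that requiring the double-leg partition to produce an intertwiner cuts out exactly the partial-isometry condition, as in \cite{bsp}. The case $J = id$ recovers the statements for $O_n^+, H_n^+, O_n, H_n$ proved in \cite{bsp}, and the general case reduces to that one at $\varepsilon = 1$ via the base change $C = \mathrm{diag}(\Gamma, \ldots, \Gamma, 1_q)$ used in the proof of Theorem 2.2; at $\varepsilon = -1$ one has to argue directly.

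The main obstacle I expect is the precise bookkeeping of the sign factors $\varepsilon(i_1)\varepsilon(i_3)\cdots$ and the alternating pattern $\delta_{alt}(j_1,\bar i_k, \ldots)$ in the definition of $\delta_{1_{k,l}}$. Translating ``$T_\cup$ is fixed by $u \otimes u$'' into the matrix identity $u J u^t = J$ must take into account $J\bar J = \varepsilon$ and $\varepsilon(i)\varepsilon(\bar i) = \varepsilon$, and the analogous computation at higher order (for the double-leg partition, and for pairings $\pi \in NC_2(k,l)$ with many nested cups) requires a careful induction along blocks. Once this sign calculus is handled cleanly on the generators, Proposition 3.3 propagates it to all of $D$ automatically, and the rest of the argument is standard easy-quantum-group machinery.
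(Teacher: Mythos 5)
Your proposal follows essentially the same route as the paper: Proposition 3.3 plus Woronowicz's Tannakian duality, then a verification that the generators of each category of partitions produce intertwiners equivalent to the defining relations of the corresponding quantum group (the cup vector $\sum_{ij}J_{ij}e_i\otimes e_j$ for $u=J\bar u J^{-1}$ and unitarity, the four-leg one-block partition for the partial isometry relations -- the paper uses its rotation $1_{1,3}\in NC_{even}(1,3)$ and the identity $u_{ij}^*=\varepsilon(i)\varepsilon(j)u_{\bar i\,\bar j}$), and finally passage to the classical versions by adjoining the crossing, which turns $NC_2,NC_{even}$ into $P_2,P_{even}$.

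One side remark in your plan would fail if relied upon: the reduction of the $\varepsilon=1$ case to $J=id$ via the base change $C=\mathrm{diag}(\Gamma,\ldots,\Gamma,1_q)$ works only for the orthogonal case. Conjugation by $C$ identifies $\bar O_n^+$ with $O_n^+$, but it does not carry $\bar H_n^+$ to $H_n^+$ -- indeed Theorem 2.2 gives $\bar H_n=(\mathbb T^p\wr H_p)\times H_q\neq H_n$ -- because the partial isometry relations on the entries are not preserved under this conjugation, and correspondingly $C^{\otimes 2}T_{\pi}(C^*)^{\otimes 2}$ is not of the form $T_\pi$ for the block partition. So for the hyperoctahedral cases you must run the direct sign computation with the generator $1_{1,3}$ (or its rotation), exactly as the paper does; your primary strategy already covers this, so the gap is only in the proposed shortcut, not in the main argument.
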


\begin{proof}
We must prove that we have a correspondence between quantum groups and categories of partitions as follows, with all the arrows standing for inclusions:
$$\xymatrix@R=40pt@C=48pt
{\bar{H}_n^+\ar[r]&\bar{O}_n^+\\
\bar{H}_n\ar[r]\ar[u]&\bar{O}_n\ar[u]}\qquad
\xymatrix@R=20pt@C=35pt{\\ :\\}\quad
\xymatrix@R=42pt@C=40pt
{NC_{even}\ar[d]&NC_2\ar[d]\ar[l]\\
P_{even}&P_2\ar[l]}$$

We already know from Proposition 3.3 that the linear spaces $span(T_\pi|\pi\in P(k,l))$ form a tensor $C^*$-category in the sense of Woronowicz \cite{wo2}, so what we have to do now is to prove that the categories on the right produce indeed the quantum groups on the left.

In order to do so, we proceed as in \cite{bsp}, and in subsequent papers. First of all, regarding $\bar{O}_n,\bar{O}_n^+$, and the categories of pairings $P_2,NC_2$, here the formula in Definition 3.2 above is precisely the formula in \cite{bcz}, and the computation there gives the result.

Regarding now $\bar{H}_n,\bar{H}_n^+$, here it is enough to prove the result for $\bar{H}_n^+$. Indeed, since $\bar{H}_n$ is the classical version of $\bar{H}_n^+$, obtained at the level of the corresponding Hopf algebras by dividing by the commutator ideal, at the Tannakian level the passage $\bar{H}_n^+\to\bar{H}_n$ is obtained by adding to the category the standard crossing, corresponding to the commutation relations $ab=ba$. But from $NC_{even}$ we obtain in this way $P_{even}$, as claimed.

So, let us discuss now the computation for $\bar{H}_n^+$. We recall from Definition 1.5 above that this quantum group has the following presentation:
$$C(\bar{H}_n^+)=C(\bar{O}_n^+)\Big/\Big<aa^*a=a,\forall a\in\{u_{ij}\}\Big>$$

As explained in \cite{bs1}, the partial isometry relations $aa^*a=a$, when combined with the biunitarity of the fundamental corepresentation $u=(u_{ij})$, show that the standard coordinates $u_{ij}$ satisfy $ab=ba$, for any $a,b$ distinct on the same row or column of $u$.

We conclude that the defining relations for $\bar{H}_n^+$ can be written as follows:
$$\sum_{abc}u_{ai}u_{\bar{b}i}^*u_{ci}\otimes e_a\otimes e_{\bar{b}}\otimes e_c=\sum_au_{ai}\otimes e_a\otimes e_{\bar{a}}\otimes e_a,\forall i$$

Now observe that from $u=J\bar{u}J^{-1}$ we obtain succesively:
\begin{eqnarray*}
u=J\bar{u}J^{-1}
&\implies&uJ(e_j)=J\bar{u}(e_j),\forall j\\
&\implies&\sum_i\varepsilon(j)u_{\bar{i}\,\bar{j}}\otimes e_{\bar{i}}=\sum_i\varepsilon(i)u_{ij}^*\otimes e_{\bar{i}},\forall j\\
&\implies&u_{ij}^*=\varepsilon(i)\varepsilon(j)u_{\bar{i}\,\bar{j}},\forall i,j
\end{eqnarray*}

By using this formula, we can write our defining relations as follows:
$$\sum_{abc}\varepsilon(b)u_{ai}u_{b\bar{i}}u_{ci}\otimes e_a\otimes e_{\bar{b}}\otimes e_c=\sum_a\varepsilon(i)u_{ai}\otimes e_a\otimes e_{\bar{a}}\otimes e_a,\forall i$$

Now observe that this equality tells us precisely that we must have $u^{\otimes 3}T=Tu$, and so that we must have $T\in Hom(u,u^{\otimes 3})$, where $T$ is the following linear map:
$$T:e_i\to\varepsilon(i)e_i\otimes e_{\bar{i}}\otimes e_i$$

On the other hand, according to Definition 3.2 we have $T=T_\pi$, with $\pi=1_{1,3}$. Now since this partition generates the whole category $NC_{even}$, we obtain the result.
\end{proof}

\section{Super-easiness}

Our aim here is to put the results obtained above in an axiomatic setting. Together with \cite{bsp}, these results suggest the following definition:

\begin{definition}
A quantum group $\bar{H}_n\subset G\subset\bar{O}_n^+$ is called ``super-easy'' if
$$Hom(u^{\otimes k},u^{\otimes l})=span\left(T_\pi\Big|\pi\in D(k,l)\right)$$
for a certain category of partitions $NC_2\subset D\subset P_{even}$.
\end{definition}

At the level of examples, besides the $2+2$ basic series from Theorem 3.4 we have, as in the usual easy case \cite{bsp}, half-liberations of the groups $\bar{O}_n,\bar{H}_n$, obtained by imposing the ``half-commutation'' relations $abc=cba$ to the standard generators $u_{ij}$:

\begin{proposition}
The following quantum groups are super-easy:
\begin{enumerate}
\item $\bar{O}_n^*$, given by $C(\bar{O}_n^*)=C(\bar{O}_n^+)/<abc=cba>$.

\item $\bar{H}_n^*$, obtained as $\bar{H}_n^*=\bar{H}_n^+\cap\bar{O}_n^*$.
\end{enumerate}
\end{proposition}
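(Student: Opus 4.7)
The plan is to proceed exactly as in the proof of Theorem 3.4, via Tannakian duality. The half-commutation relation $abc=cba$ for the generators $u_{ij}$ should correspond to a single extra partition $h\in P_{even}(3,3)$ being admissible, on top of the basic categories $NC_2\subset NC_{even}$ already handled in Theorem 3.4 for $\bar{O}_n^+$ and $\bar{H}_n^+$. Thus one defines, for (1), the category $D=\langle NC_2\cup\{h\}\rangle$ and, for (2), the category $D=\langle NC_{even}\cup\{h\}\rangle$, and proves super-easiness with respect to this $D$.

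The explicit choice is to take $h$ to be the half-crossing pairing, with the three blocks $\{(1^u,3^l),(2^u,2^l),(3^u,1^l)\}$. A direct computation from Definition 3.2 -- using only that each $1_{1,1}$-block contributes $\delta_{1_{1,1}}(i;j)=\varepsilon(i)\varepsilon(j)\delta_{ij}=\delta_{ij}$, since $\varepsilon(i)^2=1$ -- shows that $T_h$ is just the bare flip $e_i\otimes e_j\otimes e_k\mapsto e_k\otimes e_j\otimes e_i$, with no surviving $\varepsilon$-sign. The intertwiner condition $(1\otimes T_h)u^{\otimes 3}=u^{\otimes 3}T_h$ therefore unpacks, after a routine relabeling, to $u_{i_1j_1}u_{i_2j_2}u_{i_3j_3}=u_{i_3j_3}u_{i_2j_2}u_{i_1j_1}$ for all indices -- that is, precisely the half-commutation relation. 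Hence $T_h$ lies in $\mathrm{Hom}(u^{\otimes 3},u^{\otimes 3})$ in $\bar{O}_n^*$, and a fortiori in $\bar{H}_n^*$.

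The remainder of the proof is the usual double inclusion. On the one hand, $D\subset P_{even}$ is a category of partitions by Proposition 3.3, and so the Tannakian quantum group $G_D\subset U_n^+$ with Hom spaces spanned by the $T_\pi$, $\pi\in D$, satisfies $u=J\bar uJ^{-1}$ unitary (from the $NC_2$-intertwiners, via Theorem 3.4 applied to $\bar{O}_n^+$), and additionally the half-commutation relation (from $T_h$), giving $G_D\subset\bar{O}_n^*$; on the other hand, every $T_\pi$ for $\pi\in D$ is automatically an intertwiner for $\bar{O}_n^*$ by Proposition 3.3 combined with the previous paragraph, hence $\bar{O}_n^*\subset G_D$. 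Equality gives super-easiness. For (2) one substitutes $NC_{even}$ for $NC_2$ throughout, using Theorem 3.4 for $\bar{H}_n^+$, and also uses that $\bar{H}_n^*=\bar{H}_n^+\cap\bar{O}_n^*$ forces the intersection of the two Tannakian categories, which is exactly the span indexed by $\langle NC_{even}\cup\{h\}\rangle$. The inclusion $\bar{H}_n\subset G$ required by Definition 4.1 is automatic since classical commutativity implies half-commutation.

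The main obstacle I anticipate is the converse Tannakian implication in both cases, namely the assertion that the $C^*$-algebra relations encoded by all the $T_\pi$ for $\pi\in D$ reduce to the $\bar{O}_n^+$ presentation plus $abc=cba$, with nothing additional smuggled in by the super structure $J$. Once this is verified, the rest is a formal application of Tannakian duality in the style of the proof of Theorem 3.4.
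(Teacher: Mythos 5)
Your proposal is correct and follows essentially the same route as the paper: you identify the half-crossing pairing, compute via Definition 3.2 that each $1_{1,1}$-block contributes a plain $\delta_{ij}$ (the $\varepsilon$-signs cancelling since $\varepsilon(i)^2=1$), conclude that $T_h$ is the bare flip, and then invoke the standard easy-case argument that $T_h\in End(u^{\otimes 3})$ is equivalent to $abc=cba$. The paper's proof is just a terser version of this, leaving the final Tannakian double inclusion implicit by reference to \cite{bsp}.
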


\begin{proof}
This follows from the fact that the partition $\slash\hskip-2mm\backslash\hskip-1.7mm|\hskip1mm$, implementing the half-commutation relations $abc=cba$, fits into the super-easy framework. Indeed, we have:
$$\delta_{\slash\hskip-1.7mm\backslash\hskip-1.1mm|\hskip0.5mm}\begin{pmatrix}a&b&c\\i&j&k\end{pmatrix}=\delta_|\begin{pmatrix}a\\k\end{pmatrix}\delta_|\begin{pmatrix}b\\j\end{pmatrix}\delta_|\begin{pmatrix}c\\i\end{pmatrix}=\delta_{ak}\delta_{bj}\delta_{ci}$$

We deduce that we have $T_{\slash\hskip-1.7mm\backslash\hskip-1.1mm|\hskip0.5mm}(e_a\otimes e_b\otimes e_c)=e_c\otimes e_b\otimes e_a$, so the same computation as in the usual easy case \cite{bsp} applies, and shows that the condition $T_{\slash\hskip-1.7mm\backslash\hskip-1.1mm|\hskip0.5mm}\in End(u^{\otimes 3})$ is equivalent to the half-commutation relations $abc=cba$ between the standard generators $u_{ij}$.
\end{proof}

Our notion of super-easiness is certainly not the most general one, because it still does not cover $\bar{S}_n,\bar{B}_n,\bar{S}_n^+,\bar{B}_n^+$, nor the unitary easy quantum groups from \cite{twe}, nor several interesting examples in the orthogonal case, such as the twists $O_n^{-1},O_n^{*-1}$ from \cite{ban}.

Leaving aside $\bar{S}_n,\bar{B}_n,\bar{S}_n^+,\bar{B}_n^+$ and other quantum groups of the same type, these supplementary examples suggest the following extension of Definition 4.1:

\begin{definition}
A quantum group $G\subset U_n^+$ is called ``weakly super-easy'' if
$$Hom(u^{\otimes k},u^{\otimes l})=span\left(T_\pi^\delta\Big|\pi\in\mathcal D(k,l)\right)$$
for a certain category of colored partitions $\mathcal D\subset\mathcal{P}_{even}$, where $\mathcal P_{even}$ is the category of partitions with even blocks, and legs labelled black and white, and where
$$T_\pi^\delta(e_{i_1}\otimes\ldots\otimes e_{i_k})=\sum_{j_1\ldots j_l}\delta_\pi\begin{pmatrix}i_1&\ldots&i_k\\ j_1&\ldots&j_l\end{pmatrix}e_{j_1}\otimes\ldots\otimes e_{j_l}$$
for a certain choice of the Kronecker symbols $\delta_\pi\in\{-1,0,1\}$.
\end{definition}

To be more precise, we use here the formalism in \cite{twe}, under the assumption that the blocks have even size. Our point comes from the fact that when allowing the Kronecker function $\delta$ to be signed and arbitrary, we can cover in this way the quantum groups from Definition 4.1, as well as other examples, such as the twists $O_n^{-1},O_n^{*-1}$ from \cite{ban}.

The problem of classifying such quantum groups is open. On one hand we have the problem of classifying the categories of partitions $\mathcal D\subset\mathcal{P}_{even}$, which is well-known and difficult, and on the other hand we have the problem of classifying the possible Kronecker functions $\delta$, which is non-trivial as well, and that we would like to raise here.

\end{document}